\title[Symplectic resolutions for Higgs bundles]{Symplectic resolutions for Higgs moduli spaces}
\author{Andrea Tirelli}
\address{Department of Mathematics, Imperial College, London, 180 Queen’s Gate, London SW7 2AZ, UK}
\email{a.tirelli15@imperial.ac.uk}
\date{}
\newcommand{\mc}[1]{\mathcal{#1}}
\newcommand{\mb}[1]{\mathbb{#1}}
\newcommand{\mf}[1]{\mathfrak{#1}}
\newcommand{\mr}[1]{\mathrm{#1}}
\newcommand{\tit}[1]{\textit{#1}}
\newcommand{\mm}{\mc{M}}
\newtheorem{thm}{Theorem}[section]
\newtheorem*{mainthm*}{Main result}
\newtheorem{lem}[thm]{Lemma}
\newtheorem{prop}[thm]{Proposition}
\theoremstyle{definition}
\newtheorem{defn}[thm]{Definition}
\theoremstyle{remark}
\newtheorem*{claim*}{Claim}
\newtheorem{rem}[thm]{Remark} 
\begin{document}
\begin{abstract}
In this paper, we study the algebraic symplectic geometry of the singular moduli spaces of Higgs bundles of degree $0$ and rank $n$ on a compact Riemann surface $X$ of genus $g$. In particular, we prove that such moduli spaces are symplectic singularities, in the sense of Beauville \cite{beauville}, and admit a projective symplectic resolution if and only if $g=1$ or $(g, n)=(2,2)$. These results are an application of a recent paper by Bellamy and Schedler \cite{bellamy-schedler} via the so-called Isosingularity Theorem.
\end{abstract}
\maketitle
\section{Introduction}\label{intro}
In this paper we show how a recent result of Bellamy and Schedler \cite{bellamy-schedler} on symplectic resolutions of quiver and character varieties can be used to derive information on symplectic resolutions for the moduli space $\mc{M}_H(X, n)$ of semistable Higgs bundles of degree 0 and rank $n$ on a compact Riemann surface $X$ of genus $g$. In particular, we prove that for $g > 1$ and $(g, n)\neq (2,2)$ the aforementioned moduli space  does not admit such a resolution. On the other hand, we show that, in the case of elliptic curves, $\mm_H(X, n)$ does admit a symplectic resolution (note that in the case $(g, n)=(2,2)$ such a resolution was constructed in \cite{kiem-yoo}). For the proof of these results, a central tool is the so-called Isosingularity Theorem, proved by Simpson in the seminal paper \cite{simpson1994-2}.
\par Higgs bundles and symplectic resolutions have become ubiquitous throughout algebraic and differential geometry, representation theory and mathematical physics. For instance, Higgs bundles, which first emerged thirty years ago in Nigel Hitchin’s study of the self-duality
equations on a Riemann surface \cite{hitchin1987} and in Carlos Simpson’s work on nonabelian Hodge theory, \cite{simpson1994-1, simpson1994-2}, play a role in many different areas of mathematics, including gauge theory, Kähler and hyperkähler geometry, surface group representations, integrable systems, nonabelian Hodge theory, the
Deligne–Simpson problem on products of matrices, and (most recently) mirror symmetry and Langlands duality. On the other hand, the theory of (conical) symplectic resolutions has been widely studied not only in mathematics, but also in physics, and has applications and connections to representation theory, symplectic geometry, quantum cohomology, mirror symmetry, and equivariant cohomology. For a survey on some of these connections, see, \tit{e.g.}, \cite{bpw12} and \cite{blpw14}.
\par Symplectic resolutions for Higgs bundles have been considered by Kiem and Yoo in \cite{kiem-yoo}: in their work they prove that $\mm_H(X, 2)$ admits a symplectic desingularization if and only if $g=2$. 

\begin{rem}As a special case of our main theorem, we obtain a new short proof of the aforementioned result form \cite{kiem-yoo} that there is no symplectic resolution of $\mm_H(X, n)$ when $n=2$ and $g\geq 3$.
\end{rem}

The paper is organized as follows: in Section \ref{cvhbsr} we recall some well known results on Higgs bundles and their moduli spaces and give the relevant definitions of symplectic singularities and symplectic resolutions; then, in Section \ref{isosec}, we recall Simpson's Isosingularity Theorem, which is the central result needed for our proof; in Section \ref{cvhbproof}, we state the theorem of \cite{bellamy-schedler} on the non-existence of symplectic resolutions for a certain class of character varieties and explain how one can formulate and prove an analogous statement for the moduli space of Higgs bundles. Moreover, in addition to the $(g, n)=(2,2)$ case, for which we already know from \cite{kiem-yoo} the existence of a symplectic resolution, we prove that, in the elliptic curve case, one has a symplectic resolution.
\begin{mainthm*}[Theorems \ref{hbsing}, \ref{nosympfin} and \ref{symp-ell} below]The following holds true:
 \begin{itemize}
\item[(A)]The moduli spaces $\mm_H(X, n)$ are symplectic singularities. 
\item[(B)] They admit projective symplectic resolutions exactly in the cases $g=1$ and $(g, n)=(2,2)$.
\end{itemize}
\end{mainthm*}
Part (A) of the above result is proved by using Namikawa's criterion \cite[Theorem 6]{namikawa}, Simpson's Isosingularity Theorem (Theorem \ref{isosing}) and the hyperk\"ahler structure on the moduli space of stable Higgs bundles. Furthermore, part (B) follows from a combination of aforementioned Isosingularity Theorem and (a formal analogue of)  one of the main results in \cite{bellamy-schedler}, (Theorem \ref{nosymp}). Finally, to consider the elliptic curve case, a result of Franco \cite{franco} gives a clear geometric description of the moduli space $\mm_H(X, n)$, when $X$ is an elliptic curve, which allows us to see that $\mm_H(X, n)$ does admit a symplectic resolution (Theorem \ref{symp-ell}).

\subsection*{Acknowledgements}The author wishes to thank his PhD supervisor Dr Travis Schedler for the support and guidance given in studying the subject of the present article, Laura Schaposnik for useful comments on a first draft of this paper and Emilio Franco, Indranil Biswas, Marina Logares, Tamas Hausel, Ben Davison and Richard Wentworth for useful conversations on this project. This work was supported by the Engineering and Physical Sciences Research Council [EP/L015234/1], 
The EPSRC Centre for Doctoral Training in Geometry and Number Theory (The London School of Geometry and Number Theory), Imperial College London and University College London.
\section{Character varieties, Higgs bundles and symplectic resolutions}\label{cvhbsr}

In this section we will give the main definitions and results to fix the set up of the paper. For the sake of brevity, we will not give the proofs of any of the statements mentioned, for which we shall refer the reader to the relevant references.
\subsection{Character varieties} Let $X$ be a smooth complex projective curve of genus $g$, and let $G$ be a reductive algebraic group over $\mb{C}$. Consider the space $Y_G=\mr{Hom}(\pi_1(X), G)$ of homomorphisms from the fundamental group of $X$ to the group $G$. Using the presentation by generators and relations of $\pi_1(X)$, we can give $Y_G$ the structure of affine variety: indeed, we know that 
\[
\pi_1(X)\cong \langle a_1, \dots, a_g, b_1,\dots, b_g\rangle/R,
\]
where $R$ is the relation $R=\prod_{i=1}^g[a_i, b_i]$ and $[a, b]$ denotes the commutator $[a, b]:=aba^{-1}b^{-1}$. Then, we can embed $Y_G$ into $G^{2g}$ as follows: 
\[
Y_G\rightarrow G^{2g}, \ \ \rho\mapsto (\rho(a_1), \dots, \rho(b_g)),
\]
which is equivalent to considering $Y_G$ as the subvariety of $G^{2g}$ cut out by the equation \[\prod_{i=1}^g[A_i, B_i]=1,\] for $A_i, B_i \in G$, $i=1, \dots, g$.
Since the group $G$ acts by conjugation on $G$ and one may define the $G$-character variety of $X$ as the categorical quotient \cite{mumford}
\[
\mf{X}(g, G)=Y_G\sslash G.
\]
Algebraically, this is just 
\[
\mf{X}(g, G)=\mr{Spec}(\mb{C}[Y_G]^G),
\]
the spectrum of the ring of $G$-invariant functions on $Y_G$.
\begin{rem}In the notation $\mf{X}(g, G)$ we omitted $X$ since the character variety depends only on the topology of $X$, \tit{i.e.} on its genus, and not on the complex structure.
\end{rem}

Despite their simple definition, character varieties have a very rich geometry and have been the subject of a large body of literature. For the purposes of this paper, we will be interested in the case where $G=GL(n, \mb{C})$ and we will use the notation $\mf{X}(g, n)$ for the character variety $\mf{X}(g, GL(n, \mb{C}))$, where $X$ is a compact Riemann surface of genus $g$. 
\subsection{Higgs bundles} In what follows we shall give a brief overview of Higgs bundles and their moduli spaces. Our main references are \cite{raboso-rayan} and the seminal papers of Hitchin \cite{hitchin1987} and Simpson \cite{simpson1992, simpson1994-1, simpson1994-2}.
\begin{defn} A \tit{Higgs bundle} on $X$ is a pair $(E, \Phi)$, where $E$ is a holomorphic vector bundle on $X$ and $\Phi$, the \tit{Higgs field}, is an $End(E)$-valued 1-form on $X$, \tit{i.e.} $\Phi \in H^0(End(E)\otimes\Omega_X^1)$.
\end{defn}\label{higgsdefn}
In order to have a moduli space with a meaningful geometric structure, one has to consider bundles of fixed rank and degree which satisfy the following stability condition. 
\begin{defn} A Higgs bundle $(E, \Phi)$ on $X$ is \tit{semistable} if for any subbundle $F$ of $E$ such that $\Phi(F)\subset F\otimes\Omega_X^1$, one has 
\[
\mu(F)\leq \mu(E),
\]
where $\mu(E):=\mr{deg}(E)/\mr{rank}(E)$ is the slope of a vector bundle. The Higgs bundle $(E, \phi)$ is said to be \tit{stable} if the strict inequality holds. Moreover, $(E, \phi)$ is said to be \tit{polystable} if it is either stable or a direct sum of stable Higgs bundles with the same slope. 
\end{defn}
\begin{rem}Explaining the origin of the notion of Higgs bundles and the use of such terminology, first introduced by Hitchin, is beyond of the scope of this paper, and thus we would like only to highlight that Higgs bundles were defined in the context of the study of certain self-duality equations on a Riemann surface. The interested reader may wish to refer to the original paper \cite{hitchin1987} and to the references mentioned in the introduction for further details.
\end{rem}
\par As mentioned before, imposing the (semi)stability condition makes it possible to have a moduli space that has sufficiently nice properties. The construction of such a moduli space was firstly carried out by Hitchin, in the rank $2$ case \cite{hitchin1987}, and generalized to arbitrary rank by Nitsure \cite{nitsure}, via the use of Geometric Invariant Theory. We summarise this in the following theorem, which describes the structure of the moduli space in the case when the $g(X)\geq 2$.
\begin{thm}\cite{hitchin1987, nitsure}  Let $g\geq 2$ be an integer and let $\mc{M}_H(n, d)$ be the set of $S$-equivalence classes of semistable Higgs bundles of rank $n$ and degree $d$ on a smooth projective curve $X$ of genus $g$. Then, $\mc{M}_H(X, n, d)$ is a quasi-projective variety, which contains the moduli space of stable Higgs bundles $\mc{M}^s_H(X, n, d)$ as an open smooth subvariety.
\end{thm}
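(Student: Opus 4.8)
The plan is to follow the Geometric Invariant Theory construction of Nitsure \cite{nitsure}, which generalises Hitchin's original rank-$2$ analysis \cite{hitchin1987}; a conceptually different route, via the spectral correspondence and Simpson's moduli of pure sheaves, is also available, but the direct GIT argument yields the stated structure most transparently. The first step is \emph{boundedness}: for fixed rank $n$ and degree $d$, the family of semistable Higgs bundles is bounded. Although semistability is phrased only for $\Phi$-invariant subsheaves, a standard estimate bounds the maximal slope $\mu_{\max}(E)$ of the underlying bundle in terms of $\mu(E)$, $n$ and $\deg\Omega_X^1$, so the underlying bundles, and hence the pairs, form a bounded family. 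Consequently there is an integer $N\gg 0$ such that for every semistable $(E, \Phi)$ the twist $E(N)$ is globally generated with $H^1(E(N))=0$, whence $E(N)$ is a quotient of $\OO_X^{\oplus P(N)}$, where $P$ denotes the common Hilbert polynomial.

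Next I would assemble a parameter scheme. Starting from the Quot scheme $Q$ parametrising such quotients $\OO_X^{\oplus P(N)}\twoheadrightarrow E(N)$, one imposes the Higgs field as supplementary data: the requirement that $\Phi\in H^0(End(E)\otimes\Omega_X^1)$ be compatible with the quotient structure cuts out a locally closed subscheme $R$ inside a product of $Q$ with an appropriate linear space. Over $R$ there is a tautological family of Higgs bundles, and the group $G=GL(P(N), \mb{C})$ acts on $R$ by change of basis of $\mb{C}^{P(N)}$, two points lying in the same $G$-orbit precisely when the corresponding Higgs bundles are isomorphic.

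The heart of the argument is the GIT step, and this is where I expect the main difficulty to lie. One chooses a $G$-linearised ample line bundle on a suitable projective completion of $R$ and then verifies, via the Hilbert--Mumford numerical criterion, that a point of $R$ is GIT-(semi)stable if and only if the associated Higgs bundle is (semi)stable in the slope sense of the definition above. The crux is to translate one-parameter subgroups of $G$ into $\Phi$-invariant filtrations of $E$ and to match the Mumford weight against the slope inequality; this weight computation, carried out carefully in \cite{nitsure}, is the substantive part of the construction. Granting it, the GIT quotient $\mc{M}_H(X, n, d)=R\sslash G$ exists as a quasi-projective variety whose closed points are exactly the $S$-equivalence classes of semistable Higgs bundles, the closed orbits corresponding to polystable objects.

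Finally, to exhibit $\mc{M}^s_H(X, n, d)$ as a smooth open subvariety, I would observe that stable points of $R$ have closed orbits with stabiliser reduced to the scalars, so that $G$ acts with finite stabilisers modulo its centre and the quotient over the stable locus is geometric, hence open in $\mc{M}_H(X, n, d)$. Smoothness is then a deformation-theoretic statement: the tangent and obstruction spaces at $(E, \Phi)$ are the hypercohomology groups $\mb{H}^1$ and $\mb{H}^2$ of the two-term complex $End(E)\xrightarrow{[\Phi,\,\cdot\,]}End(E)\otimes\Omega_X^1$. Since $\mb{H}^2$ is Serre-dual to $\mb{H}^0$, and $\mb{H}^0$ of the corresponding trace-free complex records endomorphisms commuting with $\Phi$ modulo scalars, which vanish for a simple (in particular stable) Higgs bundle, the obstructions vanish and $\mc{M}^s_H(X, n, d)$ is smooth of the expected dimension $2+2n^2(g-1)$.
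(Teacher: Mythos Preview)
The paper does not supply its own proof of this theorem: it is stated with citation to \cite{hitchin1987, nitsure} and then used as input for the rest of the argument. Your proposal is a faithful and correct outline of Nitsure's GIT construction, which is precisely the reference invoked, so there is nothing to compare against and nothing to correct.
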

\begin{rem}\label{poly} For the sake of conciseness we omit the definition of $S$-equivalence between Higgs bundles, for which we refer the reader to \cite[Appendix, \S 3.1]{wells}. What will be important for us is that, in an $S$-equivalence class of semistable Higgs bundles, there is only one (up to isomorphism) polystable Higgs bundle; see \cite[Proposition 6.6, Corollary 6.7]{simpson1994-2} for a proof in the degree 0 case. Thus, at least for $d=0$, we may think of $\mm_H(X, n, d)$ as the moduli space of isomorphism classes of polystable Higgs bundles of rank $n$ and degree $d$. 
\end{rem}
In what follows we will use the notation $\mm_H(X, n)$ for the moduli space $\mm_H(X, n, 0)$. $\mc{M}_H(X, n)$ is precisely the object of study of this paper, and in Section \ref{cvhbproof} we will characterize the singularities of such a quasi-projective variety and determine when it admits a symplectic resolution. For this, we shall first recall some basic facts about symplectic resolutions.
\subsection{Symplectic resolutions} The theory of symplectic singularities and symplectic resolutions was first defined by Beauville in \cite{beauville} and since then it has seen a tremendous development, see \tit{e.g.} \cite{fu}.
\begin{defn}Let $Y$ be a normal algebraic variety over $\mb{C}$. Then, we say that $Y$ is a \tit{symplectic singularity} if the smooth locus of $X$, $U=Y\setminus Y^{sing}$, carries a holomorphic symplectic $2$-form $\omega_{U}$ such that, for every resolution of singularities $\rho:\tilde{Y}\rightarrow Y$ the pull-back $\rho^*\omega_{Y_U}$ extends to a holomorphic $2$-form on $\tilde{Y}$.
\end{defn}
\begin{rem}One could alternatively define symplectic singularities by requiring the existence of a resolution of singularities that satisfies the condition of the above definition. It turns out that this is equivalent to requiring the pull-back of the symplectic form to extend for $every$ resolution of singularities. 
\end{rem}
\begin{defn}Given a symplectic singularity $(X, \omega_U)$, we say that a resolution $\rho:\tilde{X}\rightarrow X$ is \tit{symplectic} if the extension of $\rho^*\omega_U$ is a holomorphic symplectic $2$-form. 
\end{defn}
\begin{rem} The reader should refer to \cite[\S 2]{fu}, for a list of examples of symplectic singularities, symplectic resolutions and an account on symplectic singularities which do not admit symplectic resolutions. In this context, Section \ref{cvhbproof} provides a further example of a symplectic singularity which does not admit a symplectic resolution. For related examples, see, \tit{e.g.}, \cite{belsingular, bs1, bs2} on quotient singularities.
\end{rem}
\begin{rem}Note that, following Beauville \cite{beauville}, we define symplectic singularities using holomorphic 2-forms. One can also define them requiring the symplectic form to be algebraic, and it appears that the symplectic structures defined on the moduli spaces under consideration are indeed algebraic.
\end{rem}
\section{The Isosingularity Theorem}\label{isosec}
In this section, we will state one of the two crucial results needed in the proof of our main result, known as the Isosingularity Theorem, and proved by Simpson in his seminal paper \cite{simpson1994-2}, where the general version of the nonabelian Hodge correspondence is given. As suggested by the name, the nonabelian Hodge correspondence can be thought of as a nonabelian version of the well known Hodge Theorem, which gives an isomorphism between the de Rham cohomology $H^n_{dR}(X, \mb{C})$ and the Dolbeaut cohomology $\oplus_{p+q=n}H^q(X,\Omega^p)$ of a compact K\"ahler manifold $X$. Putting this result together with the classical de Rham Theorem, one obtains isomorphisms:
\[
H_B^n(X, \mb{C})\cong H^n_{dR}(X, \mb{C})\cong \bigoplus_{p+q=n}H^q(X, \Omega^p).
\]
In the nonabelian Hodge correspondence, $\mb{C}$ is substituted by a complex algebraic group $G$ and the above cohomology spaces are replaced by the so-called Betti, de Rham and Higgs moduli spaces respectively, which all have a much richer geometric structure than their abelian counterparts. More explicitly, in the case of $G=GL(n, \mb{C})$, one considers the following spaces. 
\begin{itemize}
\item  The Betti moduli space $\mc{M}_B(X, n)$: the space of representations $\pi_1(X)\rightarrow GL(n, \mb{C})$ modulo the conjugation action of $G$, which is also known as the character variety $\mf{X}(g, n)$, recalled in Section \ref{cvhbsr};
\item The de Rham moduli space $\mc{M}_{dR}(X, n)$: the moduli space of flat rank $n$ vector bundles on $X$;
\item The Higgs moduli space $\mc{M}_{H}(X, n)$: the moduli space of semistable Higgs bundles of degree $0$ and rank $n$, which we recalled in Section \ref{cvhbsr}.
\end{itemize}

From the work of Hitchin \cite{hitchin1987}, Donaldson \cite{dona}, Corlette \cite{corlette} and Simpson \cite{simpson1992}, we know that there are isomorphisms of sets of points
\[
\mc{M}_B(X, n)\cong \mc{M}_{dR}(X, n)\cong \mc{M}_H(X, n). \tag{$\star$}
\]
Moreover, the nonabelian Hodge correspondence states that much more is true: indeed, the following is a consequence of results in \cite{simpson1994-2}. 
\begin{thm}\label{nonab}Denote by $\phi: \mc{M}_B(X, n)\rightarrow\mc{M}_{dR}(X, n)$ and $\psi:\mc{M}_{dR}(X, n)\rightarrow\mc{M}_{H}(X, n)$ the bijections ($\star$). Then, one has that
\begin{itemize}
\item[(1)] $\phi$ in an isomorphism of the associated complex analytic spaces;
\item[(2)] $\psi$ is a homeomorphism of topological spaces.
\end{itemize}
\end{thm}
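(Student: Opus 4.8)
The plan is to treat the two bijections separately, since they are of genuinely different natures: $\phi$ is the Riemann--Hilbert correspondence and is holomorphic, whereas $\psi$ arises from solving a nonlinear PDE and is merely continuous.

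For part (1), I would realise $\phi$ as the Riemann--Hilbert equivalence between local systems and flat bundles. A point of $\mc{M}_B(X, n)$ is a conjugacy class of representations $\rho:\pi_1(X)\to GL(n,\mb{C})$, equivalently a rank-$n$ local system $\mb{L}$ on $X$; a point of $\mc{M}_{dR}(X, n)$ is a flat bundle $(E,\nabla)$. The functor $(E,\nabla)\mapsto\ker\nabla$ (the sheaf of flat sections) together with its quasi-inverse $\mb{L}\mapsto(\mb{L}\otimes_{\mb{C}}\OO_X,\,1\otimes d)$ gives an equivalence of categories which matches irreducible representations with irreducible flat bundles and respects direct sums, hence carries semisimple objects to semisimple objects. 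I would then check that this equivalence is compatible with the two coarse moduli constructions, so that it descends to a bijection on closed points, and that it is holomorphic in both directions when carried out in families; this yields an isomorphism of the associated complex analytic spaces. The essential subtlety to flag here is that the map is \tit{not} algebraic: recovering $\ker\nabla$ requires locally integrating the connection, a transcendental operation, so the comparison holds only analytically.

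For part (2), I would use the harmonic-metric (nonabelian Hodge) correspondence. Restricting to polystable/semisimple objects on both sides (cf.\ Remark \ref{poly}), the correspondence runs through a harmonic metric $h$ on the underlying smooth bundle. Starting from a reductive flat bundle $(E,\nabla)$, the theorem of Corlette and Donaldson produces such a metric; decomposing $\nabla$ with respect to $h$ into its unitary and self-adjoint parts, and then into $(1,0)$ and $(0,1)$ components, yields a holomorphic structure $\bar\partial_E$ together with a Higgs field $\Phi\in H^0(\mr{End}(E)\otimes\Omega^1_X)$, producing a polystable Higgs bundle of degree $0$. Conversely, the Hitchin--Simpson theorem endows any polystable degree-$0$ Higgs bundle with a Hermitian--Yang--Mills metric solving the self-duality equations, from which one reassembles a flat connection. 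Uniqueness of the harmonic metric up to the obvious ambiguity makes these two constructions mutually inverse, giving the bijection $\psi$.

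Upgrading this bijection to a homeomorphism is where the main work lies, and it is the step I expect to be the hardest. The content is that the harmonic metric, and hence the resulting Higgs/flat data, depends continuously on the input; this requires the a priori estimates and the continuity/properness statements underlying Simpson's construction of $\mc{M}_{dR}(X,n)$ and $\mc{M}_H(X,n)$ as analytic spaces, applied in families, after which one deduces that $\psi$ and $\psi^{-1}$ are continuous by comparing the natural topologies on these coarse moduli spaces. I would emphasise that, unlike $\phi$, the map $\psi$ cannot be promoted to an analytic isomorphism: the two spaces carry genuinely different complex structures (they are distinct complex structures of a single hyperk\"ahler metric on the stable locus), so continuity is the strongest statement available. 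All of the deep analytic input---existence, uniqueness, and continuous dependence of harmonic metrics---is precisely the content of \cite{hitchin1987, dona, corlette, simpson1992, simpson1994-2}, which I would cite rather than reprove.
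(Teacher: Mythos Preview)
Your proposal is essentially correct, but note that the paper does not actually give a proof of this theorem: it is stated as a known consequence of \cite{simpson1994-2} (together with the preceding works \cite{hitchin1987, dona, corlette, simpson1992}) and used as a black box. Your sketch accurately unpacks what lies behind that citation---the Riemann--Hilbert correspondence for part~(1) and the harmonic-metric/nonabelian Hodge correspondence for part~(2), including the crucial observation that $\phi$ is analytic but not algebraic while $\psi$ is only continuous---and you ultimately defer to exactly the same references the paper does. So there is no discrepancy in approach; you have simply written out in more detail what the paper leaves implicit in the citation.
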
 
The fact that Theorem \ref{nonab}.~(1) is not just a set-theoretic bijection, but an isomorphism of $complex$ analytic spaces, is a key ingredient in the proof of our main result, as it enables us to transfer the formal isomorphism given by the Isosingularity theorem to another formal isomorphism between different spaces. This will become clear in Section \ref{isosec} below.\\
In what follows we shall state the Isosingularity theorem in a form that is slightly different from the original statement, but which is equivalent and more suitable for our purposes. The reader should refer to \cite[Theorem 10.6]{simpson1994-2} for a proof of these results.
\begin{thm}[Isosingularity]\label{isosing}For any point $x\in \mm_{deR(X, n)}$ there is a canonical isomorphism between the formal completions of $\mc{M}_{dR}(X, n)$ at $x$ and $\mc{M}_{H}(X, n)$ at $\psi(x)$.
\end{thm}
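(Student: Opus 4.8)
The plan is to reduce the statement to a comparison of formal deformation theories and then to invoke the harmonic metric supplied by the nonabelian Hodge correspondence to identify the two local models. Since $\mathcal{M}_{dR}(X,n)$ and $\mathcal{M}_{H}(X,n)$ are both GIT quotients, and since $x$ and $\psi(x)$ correspond under $\psi$ to a semisimple flat bundle and the associated polystable Higgs bundle, the relevant objects have the same reductive automorphism group $H$ (the correspondence being an equivalence of the underlying categories). First I would localise: by Luna's étale slice theorem, the formal completion at each point is identified with the $H$-invariants of the formal deformation space of the underlying object. Thus $\widehat{\mathcal{M}_{dR}}$ at $x$ is the formal germ at $0$ of $\mathrm{Def}(\nabla)/\!/ H$, and $\widehat{\mathcal{M}_{H}}$ at $\psi(x)$ is the germ of $\mathrm{Def}(E,\Phi)/\!/ H$.

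Next I would identify the controlling differential graded Lie algebras. The deformations of the flat connection $\nabla$ are governed by the de Rham complex of $\mathrm{End}(E)$-valued forms with its natural bracket, with tangent space $H^1_{dR}(X,\mathrm{End}\,E)$ and obstructions in $H^2_{dR}$; the deformations of $(E,\Phi)$ are governed by the Dolbeault (Higgs) complex $(\Omega^{\bullet}(\mathrm{End}\,E),\ \bar\partial+[\Phi,-])$, with tangent space $H^1_{Dol}$ and obstructions in $H^2_{Dol}$. By the Goldman--Millson formalism, whenever such a DGLA is formal its formal deformation space is exactly the quadratic cone $\{\eta : [\eta\wedge\eta]=0\}\subset H^1$, with no higher corrections.

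The heart of the argument is then the following. The harmonic metric at the point --- which is precisely the datum produced by the correspondence $\psi$ --- yields Kähler-type identities for the $\mathrm{End}(E)$-valued complexes, hence a nonabelian $\partial\bar\partial$-lemma. From this one deduces that both DGLAs are formal and, moreover, that harmonic representatives furnish an $H$-equivariant isomorphism $H^1_{dR}\cong H^1_{Dol}$ under which the two quadratic obstruction maps into $H^2$ coincide. Consequently the two quadratic cones, together with the $H$-actions, are identified; passing to $H$-invariants produces the desired isomorphism of formal completions, and its canonicity is inherited from the uniqueness of the harmonic metric.

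The hard part will be the formality and quasi-isomorphism step: one must establish the harmonic-theoretic formality for twisted $\mathrm{End}(E)$-coefficients and verify that the Hodge-theoretic identification of first cohomologies genuinely intertwines the bracket obstruction maps, all while preserving the $H$-equivariance needed to descend to the GIT quotient. The most delicate case is at the non-smooth points, where $H$ is genuinely nonabelian and the quotient $C/\!/ H$ carries nontrivial singularities, so that the comparison of cones must be controlled equivariantly rather than just as germs of smooth loci.
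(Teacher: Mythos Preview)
The paper does not actually prove this theorem: immediately before the statement it says ``The reader should refer to \cite[Theorem 10.6]{simpson1994-2} for a proof of these results,'' and no argument is given. Your outline is, in fact, a faithful sketch of Simpson's own proof: reduce to the formal deformation functors via an \'etale slice, control them by the de Rham and Dolbeault DGLAs on $\mathrm{End}(E)$, use the harmonic metric and the resulting K\"ahler identities to obtain formality and an $H$-equivariant identification of the quadratic cones in $H^1$, and then pass to $H$-invariants. So there is nothing to compare against in the present paper; your proposal simply reconstructs the cited source, and the ``hard part'' you flag (equivariant formality at reducible points) is exactly where Simpson's analytic input does the work.
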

\begin{rem} One can deduce an important corollary from the above theorem using a result of Artin (\cite[Corollary 2.6]{artin}): indeed, using this result one can prove that Theorem \ref{isosing} implies that $\mm_H(X, n)$ and $\mm_B(X, n)$ are locally \'etale isomorphic at corresponding points. We will make use of this consequence later in the paper when studying the singularities of $\mm_H(X, n).$
\end{rem}
As a consequence, one can relate formal completions of the spaces $\mm_B(X, n)$ and $\mm_H(X, n)$ at corresponding points. To this end, one needs the following result, which relates formal and analytic completions of a variety at a point. The reader should refer to \cite[Chapter 13]{taylor} for a proof of this result and a detailed treatment of the relations between the complex algebraic and the analytic points of view.
\begin{prop}\label{formal} Let $V$ be an algebraic variety and let $V^{an}$ denote the space $V$ considered as a complex analytic space. For $x$ a point in $V$, there is an isomorphism of locally ringed spaces
\[
\widehat{V}_x\cong \widehat{V_x^{an}}.
\]
\end{prop}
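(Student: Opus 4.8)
The plan is to reduce the statement to an assertion purely about local rings. By definition, the formal completion of a locally ringed space at a point $x$ is the formal spectrum of the completed local ring at $x$; thus $\widehat{V}_x$ is determined by $\widehat{\mc{O}_{V,x}}$, the $\mf{m}_x$-adic completion of the algebraic local ring, while $\widehat{V_x^{an}}$ is determined by $\widehat{\mc{O}_{V^{an},x}}$, the completion of the analytic local ring of germs of holomorphic functions at $x$. Hence it suffices to produce a natural isomorphism $\widehat{\mc{O}_{V,x}} \xrightarrow{\sim} \widehat{\mc{O}_{V^{an},x}}$ of complete local $\mb{C}$-algebras, which then induces the desired isomorphism of formal completions as locally ringed spaces.

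To build the comparison map, I would start from the canonical local homomorphism $\mc{O}_{V,x} \to \mc{O}_{V^{an},x}$ arising from the morphism of ringed spaces $V^{an} \to V$. Both rings are Noetherian local $\mb{C}$-algebras with residue field $\mb{C}$, and the homomorphism is local, so it passes to the $\mf{m}_x$-adic completions (here the Noetherianity of the ring of convergent power series, and hence of $\mc{O}_{V^{an},x}$, is a classical but nontrivial fact). The heart of the matter is then to show that the induced map on completions is bijective.

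First I would treat the smooth ambient case $V = \mb{A}^N$ with $x$ the origin. Here the algebraic local ring consists of rational functions regular at $x$, the analytic local ring is the ring $\mb{C}\{z_1, \dots, z_N\}$ of convergent power series, and in both cases the maximal ideal is generated by the coordinates $z_1, \dots, z_N$. Consequently the finite jets agree: the natural map induces isomorphisms $\mc{O}_{V,x}/\mf{m}_x^k \cong \mc{O}_{V^{an},x}/\mf{m}_x^k$ for every $k$, since a rational function regular at $x$ and its holomorphic germ have the same Taylor polynomial of each order. Passing to the inverse limit over $k$ identifies both completions with the formal power series ring $\mb{C}[[z_1, \dots, z_N]]$.

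For a general variety $V$, I would embed a neighbourhood of $x$ as a closed subvariety of $\mb{A}^N$ cut out by an ideal $I$, so that $\mc{O}_{V,x} = \mc{O}_{\mb{A}^N,x}/I_x$ and, using flatness of analytification, $\mc{O}_{V^{an},x} = \mc{O}_{(\mb{A}^N)^{an},x}/I_x^{an}$, where $I_x^{an}$ denotes the extension of $I_x$. Because completion is exact on finitely generated modules over a Noetherian local ring (Artin--Rees), it commutes with these quotients, and the smooth case gives
\[
\widehat{\mc{O}_{V,x}} \cong \mb{C}[[z_1, \dots, z_N]]/\widehat{I_x} \cong \widehat{\mc{O}_{V^{an},x}},
\]
the two outer completions being computed inside the same ambient formal power series ring. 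The main obstacle in making this rigorous is precisely the interplay of the two nontrivial inputs about the analytic local ring: that $\mc{O}_{V^{an},x}$ is Noetherian, and that $\mc{O}_{\mb{A}^N,x} \to \mc{O}_{(\mb{A}^N)^{an},x}$ is flat, so that extension of ideals is well behaved and the analytic quotient is computed by the same ideal $I$. Granting these foundational comparison facts (which are exactly those collected in the cited reference), the isomorphism of completed local rings, and hence of the formal completions $\widehat{V}_x$ and $\widehat{V_x^{an}}$ as locally ringed spaces, follows formally.
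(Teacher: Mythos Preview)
Your argument is correct and is the standard route to this classical comparison result: reduce to completed local rings, verify the statement for affine space by identifying both completions with formal power series, and then deduce the general case from a closed embedding using exactness of completion and the fact that analytification is flat. The two nontrivial analytic inputs you flag (Noetherianity of $\mc{O}_{V^{an},x}$ and flatness of $\mc{O}_{V,x}\to\mc{O}_{V^{an},x}$) are exactly what is needed, and you are right that they must be imported.

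There is nothing to compare against in the paper itself: the proposition is stated without proof and the reader is simply referred to \cite[Chapter 13]{taylor}. Your sketch is in fact a faithful outline of the argument one finds in such references, so you have supplied what the paper omits rather than diverged from it.
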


With the above proposition at hand, one can prove the following theorem, which is the crucial tool to transfer the results about symplectic resolutions from the context of character varieties to that of Higgs bundle moduli spaces.
\begin{thm}\label{formal-neig} There is an isomorphism between the formal completions of the spaces $\mc{M}_B(X, n)$ and $\mm_H(X, n)$ at corresponding points.
\end{thm}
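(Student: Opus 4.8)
The plan is to build the desired isomorphism as a composition, bridging the analytic and algebraic pictures by means of Proposition \ref{formal} and routing the de Rham--Higgs comparison through the Isosingularity Theorem rather than through the homeomorphism $\psi$ directly. Fix a point $b\in\mc{M}_B(X,n)$ and set $d:=\phi(b)\in\mc{M}_{dR}(X,n)$ and $h:=\psi(d)\in\mm_H(X,n)$; these are the corresponding points under $\psi\circ\phi$. I would establish the isomorphism $\widehat{(\mc{M}_B)}_b\cong\widehat{(\mm_H)}_h$ through a four-term chain.

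First, applying Proposition \ref{formal} to $V=\mc{M}_B(X,n)$ identifies the algebraic formal completion with the analytic one:
\[
\widehat{(\mc{M}_B)}_b\cong\widehat{(\mc{M}_B^{an})_b}.
\]
Next, by Theorem \ref{nonab}.(1), $\phi$ is an isomorphism of complex analytic spaces, hence of locally ringed spaces, so it induces an isomorphism of analytic local rings $\mc{O}_{\mc{M}_B^{an},\,b}\cong\mc{O}_{\mc{M}_{dR}^{an},\,d}$; passing to completions at the respective maximal ideals yields
\[
\widehat{(\mc{M}_B^{an})_b}\cong\widehat{(\mc{M}_{dR}^{an})_d}.
\]
Applying Proposition \ref{formal} once more, now to $V=\mc{M}_{dR}(X,n)$, returns us to the algebraic side,
\[
\widehat{(\mc{M}_{dR}^{an})_d}\cong\widehat{(\mc{M}_{dR})_d},
\]
and finally the Isosingularity Theorem (Theorem \ref{isosing}) supplies the canonical isomorphism of algebraic formal completions
\[
\widehat{(\mc{M}_{dR})_d}\cong\widehat{(\mm_H)}_h.
\]
Composing the four isomorphisms gives the claim.

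The conceptual crux, and the reason the argument is arranged in this way, is that $\psi$ is only a homeomorphism (Theorem \ref{nonab}.(2)) and is not known to be analytic, so one cannot directly compare the completions of $\mc{M}_{dR}$ and $\mm_H$ by pulling back local rings along $\psi$. This is precisely what the Isosingularity Theorem circumvents: it provides the formal isomorphism at corresponding points directly at the level of algebraic formal completions, bypassing any need for analyticity of $\psi$. The main technical point I expect to verify carefully is the compatibility of the passages between the analytic and algebraic categories furnished by Proposition \ref{formal} with the maps involved, namely that the identification of local rings induced by $\phi$ is compatible with the one underlying the statement of Theorem \ref{isosing}, so that the composite is indeed taken at genuinely corresponding points $b$, $d=\phi(b)$, $h=\psi(d)$. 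Once these compatibilities are recorded, the remaining steps are formal.
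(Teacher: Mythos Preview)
Your proof is correct and follows exactly the same four-step chain as the paper: Proposition \ref{formal} on $\mc{M}_B$, the analytic isomorphism from Theorem \ref{nonab}(1), Proposition \ref{formal} on $\mc{M}_{dR}$, and finally the Isosingularity Theorem. Your additional remark explaining why the argument must route through Theorem \ref{isosing} rather than through $\psi$ directly is a useful clarification not spelled out in the paper.
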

\begin{proof}
The result is a consequence of Theorem \ref{isosing}, Proposition \ref{formal} and part (1)~of Theorem \ref{nonab}. Indeed, for $x$ a point in $\mc{M}_B(X, n)$ one has the following chain of isomorphisms
\[\widehat{\mc{M}_B(X, n)_x} \cong \widehat{\mc{M}_B(X, n)_x^{an}}\cong\widehat{\mc{M}_{dR}(X, n)_{x'}^{an}}\cong \widehat{\mc{M}_{dR}(X, n)_{x'}}\cong\widehat{\mc{M}_{H}(X, n)_{x''}},
\]
where $x'=\phi(x)$ and $x''=\psi(x')$ and the first and the third isomorphisms come from Proposition \ref{formal}, the second is a consequence of Theorem \ref{nonab}, and the fourth is precisely the Isosingularity Theorem.
\end{proof}
\section{Symplectic resolutions: from character varieties to Higgs bundles}\label{cvhbproof}
\subsection{Proof of the main result}We first recall results of Bellamy and Schedler \cite[Proposition 8.5, Corollary 8.16]{bellamy-schedler} about the nature of the singularities of $\mf{X}(g, n)$ and the existence of symplectic resolutions. Then, via Theorem \ref{formal-neig}, we prove that the analogous result holds for the variety $\mc{M}_H(X, n)$. We will use Simpson's notation $\mc{M}_B(X, n)$ for the character variety $\mf{X}(g, n)$.
\begin{thm}\label{charsymp}\cite[Proposition 8.5]{bellamy-schedler} The Poisson variety $\mc{M}_B(X, n)$ is a symplectic singularity.
\end{thm}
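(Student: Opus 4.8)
The plan is to exhibit $\mc{M}_B(X,n)$ as a (multiplicative) symplectic reduction of a smooth variety and then to verify, one by one, the three defining properties of a symplectic singularity: normality, the presence of a holomorphic symplectic form on the smooth locus, and the extension of its pullback to a resolution. The extension property I would reduce to a purely local question and settle by comparison with a linear symplectic quotient.

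First I would record the global structure. With $G=GL(n,\mb{C})$ and $\mu\colon G^{2g}\to G$ the product-of-commutators map $\mu(A_1,B_1,\dots,A_g,B_g)=\prod_{i=1}^g[A_i,B_i]$, one has $Y_G=\mu^{-1}(1)$ and $\mc{M}_B(X,n)=Y_G\sslash G$. The space $G^{2g}$ carries the Alekseev--Malkin--Meinrenken quasi-Hamiltonian structure with group-valued moment map $\mu$, and the associated reduction endows $\mc{M}_B(X,n)$ with its Goldman Poisson structure; on the open locus of classes of irreducible representations this is nondegenerate and supplies the symplectic form $\omega_U$ on the smooth locus $U$. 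This takes care of the symplectic form. For normality one needs that $Y_G$ is a reduced, irreducible complete intersection of the expected dimension whose singular locus has codimension $\geq 2$, whence Serre's criterion applies and normality passes to the GIT quotient; these facts about $Y_G$ are the nontrivial input, established in \cite{bellamy-schedler}.

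The core of the argument --- and the step I expect to be the main obstacle --- is the local model needed for the extension property. Fix a closed point $[\rho]$ given by a polystable (hence semisimple) representation $\rho$ with reductive stabiliser $H$. The deformation theory of $\rho$ is governed by $H^1(\pi_1(X),\mr{ad}\,\rho)$ together with its cup-product pairing, and Luna's \'etale slice theorem packages this into an identification of the formal neighbourhood of $[\rho]$ with that of the origin in an \emph{additive} symplectic quotient $\nu^{-1}(0)\sslash H$, where $H$ acts linearly and Hamiltonianly on a symplectic vector space $V$ assembled from the $\mr{Ext}$-data of $\rho$ (the representation space of the local ``Ext-quiver''), with moment map $\nu$. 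The essential point is that the multiplicative reduction linearises formally locally to an ordinary symplectic quotient; making this comparison precise, and matching the slice form with $\omega_U$, is the technical heart, and by Artin approximation (\cite{artin}) the formal identification can be upgraded to an \'etale-local one.

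Finally I would invoke that such linear symplectic reductions are symplectic singularities: when $\nu$ is flat --- equivalently $\nu^{-1}(0)$ has the expected dimension --- the normal variety $\nu^{-1}(0)\sslash H$ carries a symplectic form on its smooth locus whose pullback extends to any resolution, so that it has canonical (rational Gorenstein) singularities. Since normality and the extension property are \'etale-local (the latter via Namikawa's criterion \cite{namikawa}, rationality and canonicity being preserved under \'etale maps), these conclusions transfer back to $\mc{M}_B(X,n)$, which is therefore a symplectic singularity. The delicate points throughout are the flatness of $\nu$ (equivalently normality of the local reduction) and the careful passage from the formal to the \'etale picture --- precisely the kind of formal-completion bookkeeping exploited elsewhere in this paper.
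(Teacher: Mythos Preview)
The paper does not prove this theorem: it is quoted verbatim from \cite[Proposition 8.5]{bellamy-schedler} and used as a black box, so there is no ``paper's own proof'' to compare against beyond the reference. Your outline is in fact a faithful sketch of the Bellamy--Schedler argument itself: they establish normality of $Y_G$ (hence of the quotient), produce the Goldman form on the irreducible locus, and --- this is the key step you correctly isolate --- show that the formal neighbourhood of any closed point is isomorphic to that of the origin in a quiver variety (a linear symplectic reduction for the local Ext-quiver), which is known to be a symplectic singularity; Namikawa's criterion then transports the conclusion back. So your plan is correct and coincides with the approach in the cited source; the genuinely delicate inputs you flag (flatness of the local moment map and the formal/\'etale comparison) are exactly the ones handled in \cite{bellamy-schedler}.
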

\begin{thm}\label{nosymp}\cite[Corollary 8.16]{bellamy-schedler} Suppose $g>1$ and $(g, n)\neq (2,2)$. Then, the symplectic singularity $\mc{M}_B(X, n)$ does not admit a symplectic resolution.
\end{thm}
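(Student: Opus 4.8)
The plan is to realize the character variety as a quiver variety and to transport the problem into the local, combinatorial framework of \cite{bellamy-schedler}. Indeed, $\mc{M}_B(X, n)$ is the \tit{multiplicative quiver variety} attached to the quiver $Q_g$ with a single vertex and $g$ loops, taken with dimension vector $n$ and trivial stability: it is the multiplicative symplectic reduction of $GL(n, \mb{C})^{2g}$ by conjugation along the relation $\prod_{i=1}^g [A_i, B_i] = 1$. Since the non-existence of a symplectic resolution is detected in an \'etale or formal neighbourhood of a point (if no such neighbourhood admits one, neither does the total space), and since such a neighbourhood of a point of a multiplicative quiver variety is isomorphic to the corresponding neighbourhood of the \tit{additive} quiver variety $\mc{N}_n := \mu^{-1}(0) \sslash GL(n, \mb{C})$, where $\mu(A_1, B_1, \dots, A_g, B_g) = \sum_{i=1}^g [A_i, B_i]$, it suffices to rule out symplectic resolutions of $\mc{N}_n$. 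This local comparison is the additive analogue, on the Betti side, of the Isosingularity Theorem used elsewhere in the paper.

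First I would invoke a theorem of Namikawa: a conical symplectic singularity admits a symplectic resolution if and only if its $\mb{Q}$-factorial terminalization is smooth. To compute this terminalization, I would stratify $\mc{N}_n$ by the representation type of polystable points and describe the transverse slices. By the local structure theory of quiver varieties, the slice at a polystable point of type $\bigoplus_i S_i^{\oplus m_i}$, with the $S_i$ pairwise non-isomorphic simples of dimensions $\beta_i$, is itself a quiver variety, for a \tit{local quiver} $Q^\sharp$ whose vertices are the isotypic components and whose arrows are dictated by the Euler form of $Q_g$, namely $\langle \beta, \gamma\rangle = (1 - g)\beta\gamma$. For the least degenerate singular stratum, a split into two distinct simples of dimensions $1$ and $n-1$, the essential part of the slice is the minimal nilpotent orbit closure $\overline{\mc{O}}_{\min}(\mf{sl}_k)$ with $k = (g-1)(n-1)$; every such slice is resolvable, by $T^*\pn{k-1}$, and the deeper, higher-codimension strata carry slices that are again quiver varieties of the same type.

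The decisive structural feature is that the single-vertex quiver $Q_g$ admits no nontrivial GIT stability parameter, so the variation-of-GIT resolutions $\mf{M}_\theta \to \mf{M}_0$ that resolve framed or multi-vertex quiver varieties degenerate to the identity here. Consequently no symplectic resolution can be produced by the standard mechanism, and the $\mb{Q}$-factorial terminalization must be controlled intrinsically. I would then argue that, for $g > 1$ and $(g, n) \neq (2, 2)$, crepantly resolving the codimension-two (Kleinian) strata still leaves the terminalization singular over a deeper stratum, with a terminal slice that no further crepant modification can remove. The arithmetic of $k = (g-1)(n-1)$ isolates the exceptions: $g = 1$ gives $\mc{N}_n \cong \mr{Sym}^n(\mb{C}^2)$, resolved by the Hilbert scheme, while $(g, n) = (2, 2)$ is precisely the boundary case $k = 1$, where the generic split contributes no singularity and the deepest slice remains resolvable, in agreement with the resolution of \cite{kiem-yoo}.

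The hard part is the final step: showing that the $\mb{Q}$-factorial terminalization is genuinely singular for every $(g, n)$ outside the exceptional list. The codimension-two slices are harmlessly Kleinian and the minimal-orbit slices are individually resolvable, so the obstruction is global rather than local — the difficulty is that locally available crepant resolutions fail to glue into a smooth terminal model. Making this precise requires the sharp input of \cite{bellamy-schedler}: for this class of varieties the existence of a symplectic resolution is equivalent to the simultaneous resolvability, compatible with $\mb{Q}$-factoriality, of all the transverse slices, so that a single surviving terminal slice at a deep stratum forces global non-existence. Establishing this local-to-global principle, together with the root-theoretic bookkeeping on the family of local quivers $Q^\sharp$ needed to locate the obstructing slice, is where the substance of the argument lies.
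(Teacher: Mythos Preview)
The paper does not prove this statement itself; it is quoted from \cite{bellamy-schedler}, and only two strategies from that reference are sketched. Your reduction step follows strategy~(B) --- replace the character variety, \'etale-locally at the trivial representation, by the additive quiver variety $\mc{N}_n$ for the one-vertex $g$-loop quiver --- and that part is fine. The difficulty is in your endgame.

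The actual argument is strategy~(A): one shows that $\mc{N}_n$ (equivalently $\mc{M}_B(X,n)$) is \emph{factorial} with \emph{terminal} singularities; being singular, it is then its own $\mb{Q}$-factorial terminalization and admits no crepant, hence no symplectic, resolution. You come close: your observation that the one-vertex quiver has no nontrivial stability parameter is exactly the seed of factoriality, but you draw only the weak conclusion that VGIT produces no resolution, not the strong one that no crepant modification exists at all. Terminality is also simpler than you suggest: for $g>1$ and $(g,n)\neq(2,2)$ the minimal-codimension polystable stratum (the $(1,n-1)$ split) already has codimension $4(g-1)(n-1)-2\geq 6$, so there is no codimension-two Kleinian locus to ``resolve first.'' Your picture of locally available resolutions that fail to glue, with an obstructing terminal slice lurking at a deep stratum, is therefore not the mechanism --- the minimal-orbit slices you exhibit are all individually resolvable, so none can supply the obstruction you are looking for. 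The obstruction is factoriality, not a bad slice. (Incidentally, your formula $k=(g-1)(n-1)$ is off: at $(g,n)=(2,2)$ the $(1,1)$-split stratum has codimension $2$, forcing a two-dimensional $A_1$ slice rather than a smooth point, and that Kleinian transverse direction is precisely why $(2,2)$ \emph{does} admit a resolution.)
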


Although we will not prove the above results, one should note that two different strategies are proposed in \cite{bellamy-schedler} for their proofs: 
\begin{itemize}
\item[(A)] Since a symplectic resolution is a crepant resolution, to prove that the former can not exist, it suffices to prove that the latter does not exist. To this end, it is well-known that if $Y$ is a normal variety which is factorial and has terminal singularities, then $Y$ does not admit a crepant resolution. In \cite[Theorem 8.15]{bellamy-schedler} it is shown that $\mm_B(X, n)$ has these properties under the assumptions of Theorem \ref{nosymp};
\item[(B)] one may also prove Theorem \ref{nosymp} by noting that if a symplectic resolution exists, then the same is true for the formal (or \'etale) neighbourhood at every point. This gives an alternative proof of the above result because in \cite[Remark 1.21]{bellamy-schedler} it is pointed out that the formal neighbourhood of $(0, \dots, 0)$ in the $SL(n, \mb{C})$-character variety is isomorphic to the formal neighbourhood of a certain quiver variety, which in turn, from the proof of \cite[Theorem B]{kaledin}, does not admit a symplectic resolution when $g>1$ and $(n, g)\neq (2, 2)$.
\end{itemize} 
To prove Theorem \ref{nosympfin}, we will adopt strategy (B) via the use of Theorem \ref{formal-neig}, and then apply strategy (A) \'etale locally. 
\begin{prop}\cite[Theorem 6] {namikawa}\label{nami} Let $Y$ be a complex algebraic variety. Then $Y$ is a symplectic singularity if and only if $Y$ has rational Gorenstein singularities and the regular locus $U$ of $Y$ admits an everywhere non-degenerate holomorphic closed 2-form.
\end{prop}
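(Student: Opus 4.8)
The plan is to prove both implications, the essential content being the interplay between the extension property in Beauville's definition and the rational Gorenstein hypothesis; throughout write $\dim Y = 2n$ and recall that a symplectic singularity is in particular normal, so that $\operatorname{codim}_Y(Y\setminus U)\geq 2$. I would first dispatch the easy direction, that a symplectic singularity is rational Gorenstein. The symplectic form $\omega_U$ on $U$ has top exterior power $\omega_U^{\wedge n}$, a nowhere-vanishing section of the canonical bundle $K_U$; as $Y$ is normal and $Y\setminus U$ has codimension $\geq 2$, this section extends across $Y\setminus U$ and trivializes the canonical sheaf $K_Y$, so $K_Y$ is Cartier. Choosing a resolution $\rho\colon \widetilde{Y}\to Y$ with exceptional divisors $E_i$, the extension hypothesis gives a holomorphic $2$-form $\widetilde{\omega}$ extending $\rho^*\omega_U$, whence $\widetilde{\omega}^{\wedge n}$ is a holomorphic section of $K_{\widetilde{Y}}$ equal to $\rho^*(\omega_U^{\wedge n})$ off the exceptional locus. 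Since $\rho^*(\omega_U^{\wedge n})$ generates $\rho^*K_Y$, its divisor as a rational section of $K_{\widetilde{Y}}=\rho^*K_Y\otimes\mathcal{O}(\sum_i a_iE_i)$ is $\sum_i a_iE_i$; holomorphy of $\widetilde{\omega}^{\wedge n}$ then forces every discrepancy $a_i\geq 0$, so $Y$ is canonical with $K_Y$ Cartier. By Elkik's theorem canonical singularities are rational, in particular Cohen--Macaulay, so $Y$ is rational Gorenstein.

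For the converse, suppose $Y$ is rational Gorenstein and $U$ carries a symplectic form $\omega_U$. For Gorenstein singularities rational implies canonical (Kempf), so on any resolution $\rho$ all discrepancies satisfy $a_i\geq 0$. Let $\widetilde{\omega}$ be the a priori meromorphic $2$-form extending $\rho^*\omega_U$; the goal is to show $\widetilde{\omega}$ has no poles along the $E_i$. As before $\widetilde{\omega}^{\wedge n}=\rho^*(\omega_U^{\wedge n})$, viewed in $K_{\widetilde{Y}}$, vanishes to order exactly $a_i\geq 0$ along $E_i$, so the top power is already pole-free; the real work is to propagate this from $\widetilde{\omega}^{\wedge n}$ to $\widetilde{\omega}$ itself.

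The hard part will be precisely this last step, since holomorphy of a top exterior power does not by itself imply holomorphy of the $2$-form. A clean modern route is to observe that canonical implies Kawamata log terminal, so that the Greb--Kebekus--Kov\'acs--Peternell extension theorem for reflexive differentials on klt spaces applies: the form $\omega_U$ extends to a reflexive $2$-form on $Y$ and its pullback extends holomorphically to $\widetilde{Y}$, which is exactly the required extension. Alternatively, and closer to Namikawa's original argument, one localizes analytically at a general point of each $E_i$ and bounds the pole order of $\widetilde{\omega}$ by a linear-algebra estimate on the skew form constrained by pole-freeness of its $n$-th wedge power, using the rationality of the singularities through the vanishing $R^j\rho_*\mathcal{O}_{\widetilde{Y}}=0$ for $j>0$ (equivalently, via Grauert--Riemenschneider, $R^j\rho_*K_{\widetilde{Y}}=0$) to rule out a genuine pole. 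Either way $\widetilde{\omega}$ is holomorphic; since it restricts to $\omega_U$ on a dense open set it is closed and non-degenerate there, so $Y$ is a symplectic singularity.

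Finally, I would record that the criterion is well-posed: the extension property for one resolution is equivalent to its holding for all, as one sees by dominating any two resolutions by a common third and comparing the holomorphic extensions on the overlap, which is the remark already noted after Beauville's definition in the excerpt.
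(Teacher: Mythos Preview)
The paper does not prove this proposition at all: it is quoted verbatim as \cite[Theorem~6]{namikawa} and used as a black box in the proof of Theorem~\ref{hbsing}. So there is no ``paper's own proof'' to compare against; your proposal supplies an argument where the paper offers only a citation.

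That said, your outline is sound and tracks the standard proof. The forward direction is handled correctly: the top power $\omega_U^{\wedge n}$ trivialises $K_Y$, the holomorphic extension $\widetilde{\omega}$ forces the discrepancies to be non-negative, and Elkik's theorem gives rationality. For the converse you correctly isolate the genuine difficulty, namely that holomorphy of $\widetilde{\omega}^{\wedge n}$ does not by itself give holomorphy of $\widetilde{\omega}$, and you offer two legitimate routes. The Greb--Kebekus--Kov\'acs--Peternell extension theorem is a clean way to finish, though it postdates Namikawa's paper and is heavier machinery than the original argument. Your description of Namikawa's own approach is rather vague; in fact his proof proceeds via Flenner's extension theorem for reflexive differentials (valid because the singular locus has codimension $\geq 4$ in the relevant range, or via a direct argument in low codimension) rather than the local linear-algebra estimate you sketch. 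If you want the proposal to stand on its own, either commit to GKKP and state it precisely, or flesh out the second route with an accurate reference to what Namikawa actually does.
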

We now recall an important property of the moduli space $\mm_H(X, n)$, proved by Simpson in \cite{simpson1994-2}.

\begin{prop}\cite[Theorem 11.1]{simpson1994-2} \label{higgsirred}The moduli space $\mm_H(X, n)$ is irreducible and of dimension $2n^2(g-1)+2$.
\end{prop}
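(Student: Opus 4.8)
The plan is to treat the two assertions in turn: the dimension count is an essentially routine deformation-theoretic computation, whereas the irreducibility is the substantive point.

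\emph{Dimension.} I would compute the dimension at a smooth point, that is, on the nonempty open stable locus $\mc{M}_H^s(X,n)$. At a stable Higgs bundle $(E,\Phi)$ the Zariski tangent space is the first hypercohomology $\mb{H}^1$ of the two-term deformation complex
\[
C^\bullet:\quad \mr{End}(E)\ \xrightarrow{\ [\,\cdot\,,\,\Phi]\ }\ \mr{End}(E)\otimes\Omega^1_X,
\]
concentrated in degrees $0$ and $1$. Stability gives $\mb{H}^0(C^\bullet)=\mb{C}$, since only scalar endomorphisms commute with $\Phi$, and Serre duality applied to the (self-dual) complex $C^\bullet$ gives $\dim\mb{H}^2=\dim\mb{H}^0=1$. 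Riemann--Roch then computes the hypercohomology Euler characteristic as
\[
\chi(C^\bullet)=\chi(\mr{End}(E))-\chi(\mr{End}(E)\otimes\Omega^1_X)=-2n^2(g-1),
\]
using that $\mr{End}(E)$ has rank $n^2$ and degree $0$ while $\mr{End}(E)\otimes\Omega^1_X$ has degree $n^2(2g-2)$. Hence $\dim\mb{H}^1=2-\chi(C^\bullet)=2n^2(g-1)+2$, which is the dimension of $\mm_H(X,n)$ along its smooth locus. As a consistency check, the locus of Higgs bundles whose underlying bundle is stable is an open subset isomorphic to the cotangent bundle $T^*N$ of the moduli space $N$ of stable bundles of rank $n$ and degree $0$, via the Serre-duality identification $H^0(\mr{End}(E)\otimes\Omega^1_X)\cong T^*_{[E]}N$, and $\dim T^*N=2\dim N=2\bigl(n^2(g-1)+1\bigr)$ recovers the same number.

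\emph{Irreducibility.} Here I would use the Hitchin fibration $h:\mm_H(X,n)\to\mc{B}$, where $\mc{B}=\bigoplus_{i=1}^n H^0\bigl(X,(\Omega^1_X)^{\otimes i}\bigr)$ is the Hitchin base, an affine space of dimension $n^2(g-1)+1$, hence irreducible. By the spectral correspondence, over a generic point $b\in\mc{B}$ the spectral curve $X_b$ is smooth and integral and the fibre $h^{-1}(b)$ is a torsor under the Jacobian of $X_b$, an abelian variety whose dimension equals the genus $n^2(g-1)+1$ of $X_b$; in particular the generic fibre is irreducible of dimension exactly $\tfrac12\dim\mm_H(X,n)$. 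The key structural input is that $h$ is proper (Hitchin, Nitsure) and that all of its fibres are Lagrangian, hence of this same half-dimension. Consequently no irreducible component of $\mm_H(X,n)$ can map to a proper closed subvariety of $\mc{B}$ without having fibres of too large a dimension; together with the deformation-theoretic lower bound on local dimension, this forces every component to dominate the irreducible base $\mc{B}$, and since the generic fibre is irreducible there can be only one such component. Thus $\mm_H(X,n)$ is irreducible.

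The main obstacle is precisely this irreducibility step, and within it the control of the strictly semistable locus, where the underlying bundle is unstable and it is the Higgs field that enforces semistability of the pair; a priori such loci could carry spurious components. The essential tools for excluding them are the properness of $h$ together with the scaling action $\lambda\cdot(E,\Phi)=(E,\lambda\Phi)$: properness ensures that $\lim_{\lambda\to 0}\lambda\cdot(E,\Phi)$ exists and lies in the nilpotent cone $h^{-1}(0)$, so that the whole moduli space retracts $\mb{C}^\times$-equivariantly onto a central fibre governed by the connected moduli of bundles, while the Lagrangian property of the fibres rigidifies the dimension of every component. By contrast, the dimension formula is routine once the deformation complex is set up, requiring only Riemann--Roch and the self-duality that also underlies the holomorphic symplectic form on the smooth locus.
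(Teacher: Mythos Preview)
The paper does not prove this proposition at all: it is stated purely as a citation of \cite[Theorem 11.1]{simpson1994-2}, with no argument given, and is then used as a black box in the proof of Theorem \ref{hbsing}. There is therefore no ``paper's own proof'' to compare your proposal against.

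On the substance of your sketch: the dimension computation via the hypercohomology of the deformation complex is standard and correct. The irreducibility argument, however, has a genuine soft spot. You assert that all Hitchin fibres are Lagrangian and hence half-dimensional, but this is only immediate for the smooth fibres; for singular fibres, and in particular for the nilpotent cone $h^{-1}(0)$, equidimensionality is itself a nontrivial theorem (Laumon, Ginzburg) and is essentially equivalent in difficulty to what you are trying to prove. Without that input, the step ``no component can map to a proper closed subset of $\mc{B}$'' is unjustified, since a priori a component sitting entirely over the discriminant locus could have large fibres. Simpson's own argument in \cite{simpson1994-2} bypasses this by the route you gesture at in your final paragraph: he uses properness of $h$ together with the $\mb{C}^\times$-action to show that limits as $\lambda\to 0$ exist in the nilpotent cone, reduces connectedness of $\mm_H(X,n)$ to connectedness of the fixed-point locus, and then passes from connectedness to irreducibility. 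If you want a self-contained proof, you should either develop that $\mb{C}^\times$-argument fully or cite the equidimensionality of the Hitchin fibres as a separate input rather than treating it as obvious.
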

From the above, one can prove a key fact about the singularities of the moduli space $\mc{M}_H(X, n)$. In order to do this, we will need the following result, which gives an estimate on the codimension of the strictly semistable locus of the moduli space $\mm_H(X, n)$, \tit{i.e.} the locus given by Higgs bundles which are semistable but not stable, $\mm_H^{stps}(X, n)=\mm_H(X, n)\setminus \mm_H^s(X, n)$. By the results mentioned in Remark \ref{poly}, $\mm_H^{stps}(X, n)$ coincides with the locus of strictly polystable Higgs bundles, hence the notation $\mm_H^{stps}(X, n)$.
\begin{lem}\label{stps} The following holds true:
\[
\mr{codim}(\mm_H^{stps}(X, n))\geq 2.
\]
\end{lem}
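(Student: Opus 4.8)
The plan is to stratify the strictly polystable locus $\mm_H^{stps}(X, n)$ according to the isomorphism type of the polystable representative and to bound the dimension of each stratum from above. By Remark \ref{poly}, every point of $\mm_H^{stps}(X, n)$ is represented by a unique polystable Higgs bundle, which decomposes as a direct sum $\bigoplus_{i=1}^k (V_i, \Psi_i)^{\oplus m_i}$ of pairwise non-isomorphic \emph{stable} Higgs bundles $(V_i, \Psi_i)$ of degree $0$ and rank $r_i$, with multiplicities $m_i \geq 1$ satisfying $\sum_{i=1}^k m_i r_i = n$; the point lies in $\mm_H^{stps}(X,n)$ precisely when the total number of factors $\sum_i m_i$ is at least $2$. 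There are only finitely many such decomposition types, so it suffices to show that the locally closed stratum $\mm_{(r_i, m_i)}$ attached to each type has codimension at least $2$.

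First I would compute the dimension of each stratum. A point of $\mm_{(r_i, m_i)}$ is determined by the unordered collection of its stable factors $(V_i, \Psi_i) \in \mm_H^s(X, r_i)$, so the natural direct-sum map $\prod_{i=1}^k \mm_H^s(X, r_i) \to \mm_H(X, n)$ sending $((V_i, \Psi_i))_i \mapsto \bigoplus_i (V_i, \Psi_i)^{\oplus m_i}$ is generically finite onto $\overline{\mm_{(r_i, m_i)}}$ (finite-to-one up to permuting factors of equal rank, and injective on the open locus of pairwise non-isomorphic factors). Since $\mm_H(X, r)$ is irreducible of dimension $2r^2(g-1)+2$ by Proposition \ref{higgsirred}, and its stable locus is open and dense of the same dimension, this yields
\[
\dim \mm_{(r_i, m_i)} = \sum_{i=1}^k \dim \mm_H^s(X, r_i) = 2(g-1)\sum_{i=1}^k r_i^2 + 2k.
\]
Combining this with $\dim \mm_H(X, n) = 2n^2(g-1)+2$ gives
\[
\mr{codim}\,\mm_{(r_i, m_i)} = 2(g-1)\Big(n^2 - \sum_{i=1}^k r_i^2\Big) + 2 - 2k,
\]
so, since $g \geq 2$, the desired bound reduces to the purely combinatorial inequality $n^2 - \sum_i r_i^2 \geq k$.

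I would verify this inequality by splitting into two cases. If $k = 1$, then non-stability forces the multiplicity $m_1 \geq 2$ with $n = m_1 r_1$, whence $n^2 - r_1^2 = r_1^2(m_1^2 - 1) \geq 3 > 1 = k$. If $k \geq 2$, expanding $n^2 = (\sum_i m_i r_i)^2$ and retaining only the off-diagonal contributions gives the lower bound $n^2 - \sum_i r_i^2 \geq \sum_{i \neq j} m_i m_j r_i r_j \geq k(k-1) \geq k$. Either way the stratum has codimension at least $2$, and since $\mm_H^{stps}(X,n)$ is the finite union of these strata, the lemma follows.

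The main obstacle I anticipate is the dimension count for the strata rather than the final inequality: one must argue carefully that the direct-sum map is generically finite, so that no dimension is lost or gained through the automorphisms of decomposable bundles or the symmetries permuting isomorphic factors, and that the stable locus of every intermediate rank $r \leq n$ is non-empty of the expected dimension for $g \geq 2$. Once this is in place the estimate is elementary. It is worth noting that the bound is attained exactly for the $(1, n-1)$ splitting when $(g, n) = (2, 2)$, which is consistent with that being the exceptional case in the main theorem.
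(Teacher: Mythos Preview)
Your proof is correct and follows essentially the same strategy as the paper: stratify $\mm_H^{stps}(X,n)$ by the direct-sum decomposition type of the polystable representative, compute the dimension of each stratum via the product $\prod_i \mm_H^s(X,r_i)$ and Proposition~\ref{higgsirred}, and then verify the resulting elementary inequality. The only cosmetic difference is that the paper indexes strata by partitions of $n$ (without separating out multiplicities), so its ``$k$'' is the total number of stable factors and is automatically $\geq 2$, whereas you track multiplicities explicitly and therefore need the extra $k=1$ case; both reductions land on the same inequality $n^2-\sum_i r_i^2\geq k$, which the paper simply declares ``clearly seen to hold'' while you spell it out.
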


\begin{proof}
In this proof we will use the shortened notations $\mm_n$ and $\mm_n^{stps}$ for $\mm_H(X, n)$ and $\mm_H^{stps}(X, n)$, respectively. The first step is to give an explicit description of the locus $\mm^{stps}$: from Definition \ref{higgsdefn}, a polystable Higgs bundle which is not stable is the direct sum stable Higgs bundles. Therefore, for any partition $\textbf{n}=(n_1, \dots, n_k)$ of $n$ there is a set-theoretic map 
\[
\nu_{\textbf{n}}:\mm_{\textbf{n}}:=\mm^s_{n_1}\times\dots\times\mm^s_{n_k}\rightarrow \mm_n,\ \ \  ((E_1, \phi_1), \dots, (E_k, \phi_k))\mapsto(\oplus E_i, \oplus\phi_i).
\]
Note that this map $\nu_{\textbf{n}}$ is algebraic: this follows from the fact that taking direct sums is functorial and well-defined on isomorphism classes of Higgs bundles. Up to the action of a permutation group on the image, the map $\nu_{\textbf{n}}$ is injective so that 
\[
\dim(\mm_{\textbf{n}})=\dim\mr{Im}(\nu_{\textbf{n}}).
\]
Moreover, it is clear that 
\[
\mm_n^{stps}=\bigcup_{\textbf{n}\in\mc{P}(n)}\mr{Im}(\nu_{\textbf{n}}),
\]
here $\mc{P}(n)$ denotes the set of partitions of $n$. Therefore, the following holds true:
\[
\dim \mm_n^{stps}=\mr{max}_{\textbf{n}\in \mc{P}(n)}\{\dim \mm_{\textbf{n}}\}.
\]
Let $\textbf{\underline{n}}=(n_1, \dots, n_k)\in \mc{P}(n)$ be a partition such that the maximum above is attained and let $k=l(\textbf{\underline{n}})$ be its length: note that $1\leq k\leq n$. Then, be above equality can be written as 
\[
\dim \mm_n^{stps}= \sum_{i=1}^k(2n_i^2(g-1)+2).
\]
The desired estimate can be written as 
\[
\dim \mm_n^{stps}\leq \dim\mm_n-2,
\]
which, using the calculation above, is
\[
\sum_{i=1}^k(2n_i^2(g-1)+2)=2(g-1)\left(\sum_{i=1}^kn_i^2\right)+2k\leq 2(g-1)\left(\sum_{i=1}^kn_i\right)^2,
\]
but this is clearly seen to hold true and, thus, the proof is concluded.
\end{proof}
\begin{thm}\label{hbsing}Assume that $g(X)\geq 2$. Then, the moduli space $\mm_H(X, n)$ is a symplectic singularity. 
\end{thm}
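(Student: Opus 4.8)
The plan is to verify the two conditions in Namikawa's criterion (Proposition \ref{nami}): that $\mm_H(X, n)$ has rational Gorenstein singularities, and that its smooth locus carries an everywhere non-degenerate holomorphic closed $2$-form. The symplectic form is the easy half: on the stable locus $\mm_H^s(X, n)$, which by the theorem of Hitchin and Nitsure is a smooth quasi-projective variety, the classical hyperk\"ahler structure on the moduli space of stable Higgs bundles (from Hitchin \cite{hitchin1987}) supplies a holomorphic symplectic form $\omega$. By Lemma \ref{stps} the complement $\mm_H^{stps}(X, n)$ has codimension at least $2$, so the smooth locus $U$ of $\mm_H(X, n)$ agrees with $\mm_H^s(X, n)$ away from a set of codimension $\geq 2$, and $\omega$ is a non-degenerate holomorphic closed $2$-form on $U$. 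This settles the second condition.

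The harder half is showing that $\mm_H(X, n)$ has rational Gorenstein singularities. First I would invoke normality: since $\mm_H(X, n)$ is irreducible of the expected dimension (Proposition \ref{higgsirred}) and its singular locus has codimension $\geq 2$ by Lemma \ref{stps}, together with the fact that it is a GIT quotient and hence has at worst mild singularities, normality follows (for instance because the variety is smooth in codimension one and can be shown to satisfy Serre's condition $S_2$). The key idea for the rational Gorenstein property is to transport it from the Betti side. By Theorem \ref{charsymp}, the character variety $\mm_B(X, n)$ is a symplectic singularity, and in particular has rational Gorenstein singularities. The property of having rational Gorenstein singularities is local in the \'etale (equivalently, formal) topology, so I would use the consequence of the Isosingularity Theorem noted after Theorem \ref{isosing}: $\mm_H(X, n)$ and $\mm_B(X, n)$ are locally \'etale isomorphic at corresponding points, which follows by combining Theorem \ref{formal-neig} with Artin's approximation result \cite{artin}. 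Since rationality of singularities and the Gorenstein condition are both preserved under \'etale-local isomorphism, $\mm_H(X, n)$ inherits these properties from $\mm_B(X, n)$ at every point.

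The main obstacle I expect is the careful bookkeeping required to transfer the rational Gorenstein property across the formal/\'etale correspondence. The Isosingularity Theorem gives an isomorphism of \emph{formal} completions, and to conclude a statement about honest \'etale-local structure one must pass through Artin's approximation theorem as indicated in the remark following Theorem \ref{isosing}; one then needs that rational Gorenstein singularities are detected by, and preserved under, such local isomorphisms. A subtlety is that the correspondence matches a point $x \in \mm_B(X, n)$ with a point $\psi(\phi(x)) \in \mm_H(X, n)$, so one should check that every point of $\mm_H(X, n)$ arises this way (which it does, since $\phi$ and $\psi$ are bijections by Theorem \ref{nonab}) and that the smooth loci correspond, so that the comparison is meaningful precisely on the singular locus where it is needed.

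Putting the two halves together: $\mm_H(X, n)$ is normal with rational Gorenstein singularities and its regular locus admits an everywhere non-degenerate holomorphic closed $2$-form, so by Proposition \ref{nami} it is a symplectic singularity, completing the proof.
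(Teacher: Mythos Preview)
Your proposal is correct and follows essentially the same route as the paper: verify Namikawa's criterion by transporting the normal and rational Gorenstein properties from $\mm_B(X,n)$ via the Isosingularity Theorem (these being \'etale-local), and obtain the symplectic form on the smooth locus by extending the hyperk\"ahler form on $\mm_H^s(X,n)$ across its codimension-$\geq 2$ complement using Lemma~\ref{stps} and Proposition~\ref{higgsirred}. The only minor divergence is that you argue normality separately (via $R_1$ and $S_2$), whereas the paper simply treats normality as another \'etale-local property inherited from $\mm_B(X,n)$ alongside rational Gorenstein; the latter is cleaner and avoids the somewhat hand-wavy appeal to GIT.
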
 
\begin{proof}In order to prove the theorem one needs to verify that the hypotheses of Proposition \ref{nami} are satisfied. To this end, one needs to prove that $\mm_H(X, n)$ is normal, its singularities are rational Gorenstein, and the smooth locus of $\mm_H(X, n)$ admits a holomorphic symplectic form. The first two properties are proved by noting that being normal and rational Gorenstein are \'etale local properties and, thus, by the Isosingularity Theorem, $\mm_H(X, n)$ is normal and has rational Gorenstein singularities if and only if the same holds for $\mm_B(X, n)$: but this is true from Theorem \ref{charsymp}. For the third part, it is a well known result (\cite{hitchin1987, simpson1994-1}, see also \cite{wells}) that $\mm_H^s(X, n)$ admits a hyperk\"ahler structure and, thus, a holomorphic symplectic structure (this is true, more generally, for Higgs bundles of arbitrary degree $d$). Such a holomorphic symplectic structure can be extended to the smooth locus: indeed, by Lemma \ref{stps} we know that the codimension of the complement of the stable locus inside the smooth locus is at least 2, and the stable locus is dense in the smooth locus since the latter is irreducible (by Proposition \ref{higgsirred}). Therefore, all the hypotheses of Proposition \ref{nami} are satisfied and the theorem is proved.
\end{proof}

We are now ready to prove the main result of the paper.
\begin{thm}\label{nosympfin} When $g>1$ and $(g, n)\neq (2,2)$, the moduli space $\mc{M}_H(X, n)$ does not admit a symplectic resolution.
\end{thm}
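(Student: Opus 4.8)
The plan is to argue by contradiction, descending the non-existence result for $\mc{M}_B(X, n)$ (Theorem \ref{nosymp}) to $\mc{M}_H(X, n)$ through the formal isomorphism of Theorem \ref{formal-neig}, following strategy (B) and then applying strategy (A) \'etale-locally. By Theorem \ref{hbsing} we already know that $\mc{M}_H(X, n)$ is a symplectic singularity, so the question of a symplectic resolution is well posed. Suppose, for contradiction, that $\rho:\tilde{Y}\to\mc{M}_H(X, n)$ is a symplectic resolution.

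First I would use that a symplectic resolution is in particular a crepant resolution, together with the fact that the existence of a crepant resolution is a formal-local property: base-changing $\rho$ along the formal neighbourhood $\widehat{\mc{M}_H(X, n)_{x''}}$ of a singular point $x''$ produces a crepant resolution of that formal neighbourhood. Invoking Theorem \ref{formal-neig}, which furnishes an isomorphism $\widehat{\mc{M}_H(X, n)_{x''}}\cong\widehat{\mc{M}_B(X, n)_x}$ at corresponding points $x''=\psi(\phi(x))$, I would then transfer this to conclude that the formal neighbourhood of $\mc{M}_B(X, n)$ at $x$ also admits a crepant resolution.

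To reach a contradiction I would apply strategy (A): by \cite[Theorem 8.15]{bellamy-schedler}, under the hypotheses $g>1$ and $(g, n)\neq(2, 2)$ the variety $\mc{M}_B(X, n)$ is normal, locally factorial, and has terminal singularities. Since factoriality and terminality are \'etale-local (hence formal-local) properties, they persist on the formal neighbourhood of $x$, and a factorial variety with terminal singularities admits no crepant resolution. This contradicts the conclusion of the previous paragraph, and the theorem follows.

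The main obstacle is the localization principle underlying the second step: I must justify carefully that the existence of a crepant (symplectic) resolution descends to formal neighbourhoods, and that local factoriality and terminal singularities are genuinely preserved under the identification of Theorem \ref{formal-neig}. This is precisely where Artin's approximation result (the remark following Theorem \ref{isosing}) is needed, promoting the formal isomorphism to an actual \'etale-local isomorphism at corresponding points, on which these birational and local ring-theoretic invariants can be compared directly.
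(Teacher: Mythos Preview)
Your strategy is essentially the paper's: argue by contradiction, pass via Artin approximation from the formal isomorphism of Theorem \ref{formal-neig} to an \'etale-local isomorphism, transport the putative symplectic (hence crepant) resolution to an \'etale neighbourhood $U$ of a point in $\mc{M}_B(X,n)$, and contradict factoriality plus terminality of $U$. The paper also fixes the specific point corresponding to the trivial representation $\mr{Id}\in\mc{M}_B(X,n)$, since this is the point at which the cited local computations in \cite{bellamy-schedler} are actually performed.

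One technical point needs correction. You justify factoriality of the \'etale neighbourhood by asserting that local factoriality is an \'etale-local property. It is not: there are factorial local rings whose completions (or \'etale extensions) fail to be factorial, so factoriality of $\mc{M}_B(X,n)$ alone does not automatically yield factoriality of $U$. The paper handles this by citing the \emph{proof} of \cite[Theorem 8.15]{bellamy-schedler} rather than merely its statement: that argument proceeds through the \'etale-local identification with a quiver variety and establishes factoriality at that level, so factoriality of $U$ is obtained directly rather than by descent. For terminality of $U$ the paper invokes \cite[Corollary 1]{naminote} separately. You rightly flag this localization step as the delicate point in your final paragraph; the fix is to appeal to the explicit local model, not to a general \'etale-descent principle for factoriality.
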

\begin{proof}
Suppose by contradiction that when $g>1$ and $(g, n)\neq (2,2)$ such a resolution $\rho:\widetilde{\mm}\rightarrow \mm_H(X, n)$ exists and let $x$ be the point in $\mm_H(X, n)$ that corresponds to the trivial representation, call it $\mr{Id}$, in the Betti moduli space $\mm_B(X, n)$ via the homeomorphism given by Theorem \ref{nonab}.~(2). Then, by the Isosingularity Theorem, there is an \'etale neighbourhood $V$ of $x$ in $\mm_H(X, n)$ isomorphic to an \'etale neighbourhood $U$ of $\mr{Id}$ in $\mm_B(X, n)$. Furthermore, by assumption, via this \'etale local isomorphism, from the resolution $\rho$, one can construct a symplectic resolution $\tilde{\rho}$ of $U$. But $U$ is factorial and terminal: factoriality of $U$ follows from the proof of \cite[Theorem 8.15]{bellamy-schedler}; on the other hand, the fact that $U$ has terminal singularities follows from \cite[Corollary 1]{naminote}. Since this is a contradiction, the theorem follows.
\end{proof}

Given the above theorem and the resolution constructed in the case $(g, n)=(2, 2)$ in \cite{kiem-yoo}, the only case left to consider is that of elliptic curves. Recall that, from \cite{bellamy-schedler}, it is possible to show that, for $X$ an elliptic curve, the character variety $\mf{X}(g, n)$ does admit a symplectic resolution, (see \cite[Proposition 8.13]{bellamy-schedler}). Moreover, an analogous result can be obtained for the moduli space $\mm_H(X, n)$ using a result shown in E. Franco's PhD Thesis \cite{franco} (see also \cite{franco-gp-newstead}), which relies on techniques from \cite{hitchin1987}. 
\begin{thm}\cite[Theorem 4.19]{franco-gp-newstead} \label{elliptic} Consider the moduli space $\mm_H(X, n, d)$, where $X$ is an elliptic curve, and let $h=\mr{gcd}(n, d)$. Then, there exists an isomorphism 
\[
\alpha_{n, d}: \mr{Sym}^hT^*X\rightarrow \mm_H(X, n, d).
\]
\end{thm}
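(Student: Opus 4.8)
The plan is to exploit the drastic simplification that occurs on an elliptic curve, namely that $\Omega_X^1\cong\OO_X$, so that a Higgs field is simply an endomorphism $\Phi\in H^0(\mr{End}(E))$ and, crucially, the coefficients of its characteristic polynomial lie in $H^0(X,\OO_X)=\mb{C}$. Thus the eigenvalues of $\Phi$ are \emph{global constants} and the associated spectral curve of the Hitchin system \cite{hitchin1987} is a union of horizontal translates of $X$ inside the total space of $\Omega_X^1\cong T^*X$. The first step is to classify the stable objects: I claim that a stable Higgs bundle of rank $r$ and degree $e$ exists if and only if $\mr{gcd}(r,e)=1$, in which case it is of the form $(W,\mu\cdot\mr{id})$ with $W$ a stable vector bundle and $\mu\in\mb{C}$. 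Indeed, since the characteristic polynomial has constant coefficients, a stable Higgs bundle has a single eigenvalue $\mu$; writing $\Phi=\mu\cdot\mr{id}+N$ with $N$ nilpotent, both $\ker N$ and $\mr{im}(N)$ are proper $\Phi$-invariant subsheaves, and an elementary slope comparison (strict stability forces $\mu(\ker N)<\mu(E)$, whence $\mu(\mr{im}\,N)>\mu(E)$, contradicting stability applied to $\mr{im}\,N$) yields $N=0$. Once $\Phi$ is scalar, $\Phi$-invariance reduces to ordinary invariance, so $E$ must itself be a stable bundle, forcing $\mr{gcd}(r,e)=1$.

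Next I would invoke Atiyah's classification of bundles on an elliptic curve, in the form due to Tu, which identifies the moduli space of stable bundles of fixed coprime rank and degree with $X$ itself: every such bundle is the twist of a fixed one by a degree-$0$ line bundle, and $\mr{Pic}^0(X)\cong X$. Combined with the freedom $\mu\in\mb{C}=H^0(\mr{End}(W))$ for the scalar Higgs field, this shows that the moduli space of stable Higgs bundles of rank $n'=n/h$ and degree $d'=d/h$ is isomorphic to $X\times\mb{A}^1\cong T^*X$, where $h=\mr{gcd}(n,d)$. Now a polystable Higgs bundle of rank $n$ and degree $d$ is, by definition, a direct sum of stable Higgs bundles all of slope $d'/n'$; by the classification above each summand has coprime rank and degree, hence rank exactly $n'$ and degree exactly $d'$, so there are precisely $h$ summands, each a point of $T^*X$, determined only up to reordering. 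This produces the set-theoretic bijection underlying $\alpha_{n,d}$, sending an unordered $h$-tuple $\{(W_j,\mu_j)\}$ to the class of $\bigoplus_j(W_j,\mu_j\cdot\mr{id})$, and it is consistent with the dimension count $\dim\mm_H(X,n,d)=2h=\dim\mr{Sym}^hT^*X$.

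Finally I would upgrade this bijection to an isomorphism of varieties. The forward map is a morphism because forming direct sums is functorial and well defined on $S$-equivalence classes, so it descends from the universal family on $(T^*X)^h$ through the $\Sigma_h$-quotient to the moduli functor; alternatively one realises it via the spectral correspondence carried out in families over $\mr{Sym}^hT^*X$. Since $\mr{Sym}^hT^*X$ is normal and $\alpha_{n,d}$ is a proper bijective morphism onto the irreducible variety $\mm_H(X,n,d)$, Zariski's Main Theorem then gives that $\alpha_{n,d}$ is an isomorphism. I expect the main obstacle to be exactly this last step over the \emph{non-generic} locus: where several eigenvalues or several spectral line bundles collide, the horizontal spectral curve becomes non-reduced and the associated Higgs bundles acquire Jordan-type structure, so one must verify that the scheme-theoretic fibres of $\alpha_{n,d}$ are reduced points there and that no genuinely stable higher-rank summand can intervene. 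Both facts ultimately follow from the constancy of the eigenvalues established at the outset, but the delicate point is to carry out these checks \emph{in families} rather than pointwise, which is where the bulk of the technical work resides.
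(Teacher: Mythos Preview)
The paper does not give a proof of this statement: it is quoted as \cite[Theorem 4.19]{franco-gp-newstead} and then used as a black box input to Theorem~\ref{symp-ell}. So there is no in-paper argument for you to be compared against.

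That said, your outline is essentially the strategy of Franco--Garc\'ia-Prada--Newstead: exploit $\Omega_X^1\cong\OO_X$ to make the eigenvalues of $\Phi$ global constants, deduce via the nilpotent-part slope argument that stable Higgs pairs are exactly stable bundles equipped with a scalar field, invoke Atiyah--Tu to identify the stable moduli with $T^*X$, and then assemble polystable objects of slope $d/n$ as unordered $h$-tuples. One genuine gap in your final paragraph: the version of Zariski's Main Theorem you invoke needs normality of the \emph{target}, not the source. The normalisation of a cuspidal curve is a proper bijective morphism from a smooth (hence normal) curve that is not an isomorphism, so ``$\mr{Sym}^hT^*X$ normal and $\alpha_{n,d}$ proper bijective'' is not enough. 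You would need either an independent proof that $\mm_H(X,n,d)$ is normal --- which the present paper does not supply in genus $1$, since Theorem~\ref{hbsing} is stated only for $g\geq 2$ --- or, closer to what is actually done in \cite{franco-gp-newstead}, a direct construction of the inverse morphism via the Hitchin map, reading off from $(E,\Phi)$ the multiset of eigenvalues together with the corresponding spectral line bundles. Your closing remarks already flag the non-generic locus as the delicate part; the point is that the difficulty there is precisely establishing normality of the target (or equivalently building the inverse in families), not anything about the source.
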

With such an explicit geometric description of $\mm_H(X, n, d)$, one can prove the following result. 
\begin{thm}\label{symp-ell}Let $X$ be an elliptic curve. Then, the moduli space $\mm_H(X, n)$ is a symplectic singularity and it admits a symplectic resolution
\[
\mr{Hilb}^n T^*X\longrightarrow \mm_H(X, n).
\]
\end{thm}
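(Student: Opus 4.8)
```latex
The plan is to leverage the explicit isomorphism $\alpha_{n,0}\colon \mr{Sym}^n T^*X \xrightarrow{\sim} \mm_H(X,n)$ provided by Theorem \ref{elliptic} (note that for degree $d=0$ we have $h=\gcd(n,0)=n$), and to recognize the standard symplectic resolution of a symmetric product of a symplectic surface, namely the Hilbert--Chow morphism from the Hilbert scheme of points. First I would observe that $T^*X$ is a smooth quasi-projective surface carrying a canonical algebraic symplectic form $\omega$, since the cotangent bundle of any smooth variety is canonically symplectic (the form $d\lambda$ arising from the Liouville $1$-form $\lambda$). Thus $\mr{Sym}^n T^*X$ is the quotient of the smooth symplectic variety $(T^*X)^n$ by the permutation action of the symmetric group $S_n$, which acts by symplectomorphisms preserving $\omega^{\boxplus n}$.

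Next I would invoke the foundational result of Beauville that the symmetric product $\mr{Sym}^n S$ of a smooth algebraic symplectic surface $S$ is a symplectic singularity, and that the Hilbert--Chow morphism $\mr{Hilb}^n S \to \mr{Sym}^n S$ is a projective symplectic resolution. Indeed, Fogarty's theorem guarantees that $\mr{Hilb}^n S$ is smooth of dimension $2n$ when $S$ is a smooth surface, the Hilbert--Chow morphism is projective and birational, and the symplectic form on the open locus of distinct points extends to a global symplectic form on $\mr{Hilb}^n S$ by work of Beauville. Applying this with $S = T^*X$ yields a projective symplectic resolution
\[
\mr{Hilb}^n T^*X \longrightarrow \mr{Sym}^n T^*X.
\]

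Finally I would transport this resolution across the isomorphism $\alpha_{n,0}$. Since $\alpha_{n,0}$ is an isomorphism of varieties, composing the Hilbert--Chow morphism with $\alpha_{n,0}$ produces a projective symplectic resolution $\mr{Hilb}^n T^*X \to \mm_H(X,n)$, and the fact that $\mm_H(X,n)$ is a symplectic singularity follows either from Theorem \ref{hbsing} applied in the $g=1$ setting or directly from transporting Beauville's statement through $\alpha_{n,0}$. The one point requiring care is the compatibility of symplectic structures: I would need to confirm that the symplectic form on $\mm_H(X,n)$ coming from its hyperk\"ahler (or algebraic symplectic) structure agrees, under $\alpha_{n,0}$, with the natural symplectic form on $\mr{Sym}^n T^*X$ inherited from $T^*X$, at least up to scalar on the stable (smooth) locus, so that the extension property characterizing symplectic resolutions is genuinely satisfied.

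The main obstacle I anticipate is precisely this matching of symplectic forms under $\alpha_{n,0}$: the isomorphism in Theorem \ref{elliptic} is constructed via the spectral-curve/Hitchin-map description, and one must verify that it is not merely an isomorphism of the underlying varieties but respects the algebraic symplectic structures. Since both forms are defined on a dense smooth open locus (where the Higgs bundle is stable and the points of $T^*X$ are distinct) and the resolution property is a condition about extension of the pulled-back form, it suffices to check the identification of the two symplectic forms on this common dense open set, where both moduli descriptions become transparent; this is where I would concentrate the technical effort.
```
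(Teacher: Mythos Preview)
Your proposal is correct and follows essentially the same route as the paper: identify $\mm_H(X,n)$ with $\mr{Sym}^n T^*X$ via Theorem \ref{elliptic} and then invoke the classical fact that the Hilbert--Chow morphism $\mr{Hilb}^n S\to\mr{Sym}^n S$ is a symplectic resolution for a smooth symplectic surface $S$. The paper sidesteps the compatibility obstacle you flag by simply \emph{defining} the (generic) symplectic structure on $\mm_H(X,n)$ via transport through $\alpha_{n,0}$, deferring the identification with the usual hyperk\"ahler form on the stable locus to a subsequent remark attributed to Franco; note also that your appeal to Theorem \ref{hbsing} in the $g=1$ setting is not available, since that theorem is stated only for $g\geq 2$, so the transport argument is the one that actually applies.
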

\begin{proof}
We know from the Theorem \ref{elliptic} that the moduli space $\mm_H(X, n)$ is isomorphic to the $n$-th symmetric power of the cotangent bundle to the elliptic curve $X$. Via this isomorphism, we can induce a (generic) symplectic structure on $\mm_H(X,n)$ so that, by definition, the isomorphism $\alpha_{n, 0}$ is, generically, a symplectomorphism. Moreover, it is a well-known fact (\cite[Example 2.4]{fu}) that, given a smooth symplectic surface $S$, for any $n\geq 1$, the variety $\mr{Sym}^nS$ is a symplectic singularity and there exists a symplectic resolution
\[
\mr{Hilb}^nS\rightarrow\mr{Sym}^nS.
\]
Therefore, setting $S=T^*X$ the theorem follows. 
\end{proof}
\begin{rem}
E. Franco has pointed out to us that the map $\alpha_{n, d}$ is actually a symplectomorphism over the stable locus, using the hyperk\"ahler structure on $\mm_H(X, n)$, so that the generic symplectic structure constructed in Theorem \ref{symp-ell} is the usual one.
\end{rem}
\subsection{Future directions}A natural question that one may want to address is whether there are analogous results in the context of twisted character varieties and moduli spaces of semistable Higgs bundles of arbitrary degree. The former varieties are defined as follows: taking $g, n$ and $d$ to be non-negative integers, the twisted character variety $\mf{X}(g, n, d)$ is
\[
\mf{X}(g, n, d):=\left\{ (A_1, \dots, A_g, B_1, \dots, B_g) \in GL(n, \mb{C})\ |\ \prod_{i=1}^g[A_i, B_i]=e^{\frac{2\pi i d}{n}}I\right\}.\]
In this case, much less is known: indeed, it is not even clear whether the formal non-abelian Hodge correspondence holds true. Moreover, it is not known whether the twist introduced in the previous definition affects the nature of the singularities. To prove the analogue of Theorem \ref{nosymp}, due to the lack of an Isosingularity theorem in this case, it may be better to use strategy (A) outlined above. On the other hand, it is an interesting open question if such a Isosingularity statement holds in this more general setting. A detailed study will appear in future work.

\printbibliography
\end{document}